\newtheorem{theorem}{Theorem}[section]
\newtheorem{lemma}[theorem]{Lemma}
\theoremstyle{definition}
\newtheorem{definition}[theorem]{Definition}
\theoremstyle{remark}
\numberwithin{equation}{section}
\newcommand{\CC}{\mathbb{C}}  
\newcommand{\RR}{\mathbb{R}}  
\DeclareMathOperator{\cl}{cl} 
\DeclareMathOperator{\dist}{dist} 
\begin{document}

\title{The Riemann Mapping Theorem from Riemann's Viewpoint}

\author{Robert E. Greene}
\address{Department of Mathematics, University of California,
Los Angeles, CA 90095 U. S. A.} 
\email{greene@math.ucla.edu}

\author{Kang-Tae Kim}
\address{Department of Mathematics, Pohang University of Science
and Technology, Pohang City 37673 The Republic of Korea}
\email{kimkt@postech.ac.kr}

\begin{abstract}
This article presents a rigorous proof of the Riemann 
Mapping Theorem via Riemann's method, uncompromised by any 
appeals to topological intuition.
\end{abstract}

\maketitle

The Riemann Mapping Theorem is one of the most remarkable results 
of nineteenth century mathematics. Even today, more than a hundred 
fifty years later, the fact that every proper simply connected open subset 
of the complex plane is biholomorphically equivalent to every other 
seems deep and profound.  This is not a result that has become in any 
sense obvious with the passage of time and the general expansion of 
mathematics. And at the time, the theorem must have been truly startling. 
Even Gauss, never easily impressed, viewed the result favorably, though 
he had reservations about the summary nature of Riemann's writings. 
At the time, Riemann's method appeared hard to carry 
out in detail.  And indeed there have been those since who believed it 
could not be carried out in detail at all. 
\smallskip

Thus, when a different proof arose later on using Montel's idea of normal 
families, this proof established itself as standard \cite{Caratheodory}.
Indeed, it is rare to find any other proof than the normal families one 
in contemporary texts on complex analysis.  
Only if the student of complex analysis goes on to study uniformization 
of open Riemann surfaces is Riemann's original idea likely 
to be encountered.  At best, the original proof idea is relegated to 
exercises or brief summaries in texts on basic complex analysis 
(cf., e.g., Exercise 73, p.\ 251 in \cite{Greene-Krantz}, or 
Section 5.2, p.\ 249-251 in \cite{Ahlfors}).
\smallskip

And yet, in the historical view, Riemann's proposed method of proof was as 
interesting and perhaps even more important than the result itself.  It 
would have been almost impossible for anyone listening to Riemann's 
presentation in 1851 to have imagined that what they were hearing was 
the first instance of a mathematical method that would become a 
massive part of geometric mathematics in the decades to come and that 
continues to be a vitally active subject today. But so it was, for 
Riemann's proof method for his mapping theorem marked the introduction 
of the use of elliptic equations and the solution of elliptic variational 
problems to treat geometric questions. The analytic theory of Riemann 
surfaces via harmonic forms and Hodge's generalization to algebraic 
varieties in higher dimensions; the circle of results known by the name 
the Bochner technique; the theory of minimal submanifolds and its 
applications to topology of manifolds; the use of elliptic methods in 
4-manifold theory; and, most recently, the proof of the 
Poincar\'e Conjecture 
and the geometrization conjecture that extends it---all these and much 
more could not have been anticipated in any detail on that historic day at 
the time Riemann presented his mapping theorem.  But in retrospect, when 
Riemann suggested constructing the biholomorphic map to the unit disc 
that his result called for by solving an elliptic variational problem, the 
whole development began.  The fact that Riemann could not in fact 
actually prove what he called Dirichlet's Principle is almost beside 
the point.  He had found the way into the thicket. Chopping the path 
onward could be and would be done by others. 
\medskip

Thus, it seemed to the authors unfortunate that finding a precise and 
complete discussion of how actually to carry out Riemann's argument is 
not easy. Osgood's proof  \cite{Osgood} of the Riemann Mapping 
Theorem---usually regarded as the first reasonably complete proof, 
correct except for certain 
topological details being brushed over---does indeed use Riemann's 
general idea. But it is made more difficult than need be today because he 
was not in possession of the Perron method of solving the Dirichlet 
problem. Thus he had to work with piecewise linear approximations from 
the interior and take limits of the piecewise linear (even piecewise real 
analytic) case of the Dirichlet problem that had been solved by Schwarz 
already at that time \cite{Schwarz}.
\smallskip

Our goal in this article is to present a clear proof of the Riemann Mapping 
Theorem via Riemann's method, uncompromised by any appeals to 
topological intuition. Such intuitions are notoriously unreliable and, even if 
correct, can be surprisingly hard to substantiate. Moreover, one of the 
most intriguing features of the Riemann Mapping Theorem is that it 
provides a proof of the strictly topological fact that any simply connected 
open subset of the plane is homeomorphic to any other.  Since one wishes 
to deduce this topological conclusion, it is particularly desirable not to 
appeal to any unproven topological facts in the proof of the Riemann 
Mapping Theorem itself. (That simple connectivity of a domain in the plane 
implies homeomorphism to the plane [or the disc] can be shown directly, 
cf.\ Theorem 6.4, p.\ 149 in \cite{Newman}---but it is a delicate and intricate matter).
\smallskip

The basic method is Riemann's, but in the intervening years the Perron 
solution of the Dirichlet problem for any bounded domain with barriers at 
each boundary point has simplified the basic construction. That there are 
barriers at each point of the boundary of a simply connected bounded 
open set in $\CC$ does hold. This  was in effect pointed out by Osgood, though 
the barrier terminology was not in use at that time. Putting this together 
with some arguments about winding numbers and counting preimages will 
complete the proof. 
\medskip

\textit{Acknowledgments}. 
The authors are indebted to D. Marshall for helpful comments
about a preliminary version of this paper, in regard to weak and
strong barriers in particular.

\section{The Theorem's exact statement and the first steps in the proof}

The Theorem as we shall prove it is about simply connected open subsets 
of the plane \(\CC\).  People usually interpret ``simply connected'' in this 
context to mean topologically simply connected, i.e., that the open set 
is connected and also that every continuous 
closed curve in the open set can be continuously deformed inside the 
open set to a constant curve.  As it happens, we shall end up proving a 
slightly different result which, on the face of it, is stronger. Namely, we 
shall assume about the open set only that it is connected and has the 
property that every holomorphic function on it has a (holomorphic) 
anti-derivative. That is, if $f$ is a holomorphic function on the open set then 
there is a holomorphic function $F$ on the set with \(F'= f\).  We shall say  
then that $U$ is \textit{holomorphically simply connected}. 
\smallskip

It is easy to show that topological simple connectivity as defined implies the 
holomorphic  anti-derivative property just described. The theorem itself in 
the following form will show among other things the converse, 
namely, that the holomorphic simple connectivity implies 
topological simple connectivity. 

\begin{theorem}[Riemann] 
Suppose that $U$ is a connected open subset of \(\CC\) with 
\(U \neq \CC\).  If $U$ is holomorphically simply connected, then
$U$ is biholomorphic to the unit disc, i.e., there is a one-to-one 
holomorphic function from $U$ onto the unit disc 
\(D = \{z \in \CC\colon |z|<1\}\).
\end{theorem}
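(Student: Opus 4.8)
The plan is to carry out Riemann's idea directly: manufacture the biholomorphism from the Green's function of $U$ with pole at an interior point, and obtain that Green's function by the Perron method rather than by Dirichlet's principle. The first step is a reduction to the case that $U$ is bounded and contains the origin. Choosing a point $a \in \CC \setminus U$, the nowhere-vanishing function $z - a$ has a holomorphic square root $\phi$ on $U$ (any antiderivative of $(z-a)^{-1}$ exponentiates to a logarithm of $z-a$, whose half supplies $\phi$); then $\phi$ is injective and $\phi(U) \cap \bigl(-\phi(U)\bigr) = \emptyset$, so for any $b \in \phi(U)$ the point $-b$ lies outside $\cl(\phi(U))$, and $z \mapsto 1/\bigl(\phi(z)+b\bigr)$ carries $U$ biholomorphically onto a bounded domain. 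That domain is again holomorphically simply connected (an easy invariance check), and a translation puts $0$ into it; so assume henceforth $U$ is bounded with $0 \in U$.

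The crux---and the step I expect to be the main obstacle---is to show that the Dirichlet problem on $U$ is solvable with continuous boundary data, which amounts to exhibiting a barrier at every boundary point. This is exactly where holomorphic simple connectivity must be used in an essential, non-topological way. I would argue that for each $\zeta \in \partial U$ the connected component of $\zeta$ in the compact set $\bigl(\CC \cup \{\infty\}\bigr) \setminus U$ is not the singleton $\{\zeta\}$: if it were, a \v{S}ura--Bura type separation lemma would let one isolate a compact relatively-clopen piece of the complement containing $\zeta$ and surround it by a cycle in $U$ of winding number $1$ about $\zeta$, so that $\int \frac{dz}{z-\zeta}$ along that cycle would be $2\pi i$---impossible, since $(z-\zeta)^{-1}$ has an antiderivative on $U$. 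Hence $\zeta$ lies on a non-degenerate continuum contained in the complement of $U$; such a continuum meets all small circles about $\zeta$ in subcontinua of diameter comparable to the radius, so its capacity at each scale is bounded below and the Wiener criterion shows the complement is non-thin at $\zeta$, i.e. $\zeta$ is a regular boundary point. (Here one must take care with the distinction between weak and strong barriers.) With barriers available, the Perron method yields a harmonic $\gamma$ on $U$, continuous on $\cl(U)$, with $\gamma = \log|\cdot|$ on $\partial U$; then $g(z) := -\log|z| + \gamma(z)$ is harmonic on $U \setminus \{0\}$, positive there by the minimum principle, tends to $0$ at $\partial U$, and has the expected logarithmic pole at $0$: it is the Green's function of $U$ with pole at the origin.

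Next comes the construction of the map. Because $\gamma$ is harmonic on the holomorphically simply connected domain $U$, the holomorphic function $2\,\partial_z \gamma$ has an antiderivative $H$ on $U$, which I normalize so that $\mathrm{Re}\, H = \gamma$; set $f(z) := z\, e^{-H(z)}$. A direct computation gives $|f(z)| = e^{-g(z)}$. Thus $f$ is holomorphic on $U$ with a simple zero at $0$ and no other zero, $|f| < 1$ throughout $U$, and $|f(z)| \to 1$ as $z \to \partial U$; in particular $f$ maps $U$ into the unit disc $D$.

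It remains to see that $f$ is a bijection of $U$ onto $D$, which by the open mapping theorem makes it biholomorphic. Fix $w_0 \in D$. By Sard's theorem pick a regular value $t > 0$ of $g$ on $U \setminus \{0\}$ with $e^{-t} > |w_0|$; since $g \to 0$ at $\partial U$, the set $U_t := \{\, z \in U : g(z) > t \,\}$ is relatively compact in $U$ and its boundary $\partial U_t$ equals the smooth compact level set $\{g = t\}$, on which $|f| \equiv e^{-t}$. By the argument principle the number of solutions of $f(z) = w_0$ in $U_t$ equals the winding number of $f|_{\partial U_t}$ about $w_0$; since the image curve lies on $\{|w| = e^{-t}\}$ and both $w_0$ and $0$ lie in the connected set $\{|w| < e^{-t}\}$, that winding number equals the one about $0$, which counts the zeros of $f$ in $U_t$, namely $1$. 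Every solution of $f(z) = w_0$ lies in $U_t$ anyway, because $|f(z)| = |w_0| < e^{-t}$ forces $g(z) > t$. So $f$ attains each $w_0 \in D$ exactly once on $U$, and $f : U \to D$ is the desired biholomorphism.
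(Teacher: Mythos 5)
Your overall strategy---reduce to bounded $U \ni 0$, solve a Dirichlet problem by Perron, form $f(z) = z\,e^{-H(z)}$ with $|f| = e^{-g}$, then count preimages by the argument principle---is the paper's, but you diverge at both pivotal technical steps, and the divergence at the barrier step is the consequential one. You argue that the component of each $\zeta\in\partial U$ in $(\CC\cup\{\infty\})\setminus U$ is a nondegenerate continuum (via a \v{S}ura--Bura separation and a winding-number contradiction from the antiderivative of $(z-\zeta)^{-1}$) and then invoke the Wiener criterion to get regularity. That is exactly the route the paper identifies as ``hard to establish'' and deliberately avoids: the \v{S}ura--Bura separation, the construction of an enclosing cycle in $U$ with winding number one about $\zeta$, and the capacity lower bound for continua that makes the Wiener series diverge are each nontrivial, and the paper's whole aim is a proof free of such topological and potential-theoretic overhead. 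The paper instead produces a weak barrier at $q \in \partial U$ by a short explicit computation: holomorphic simple connectivity gives a logarithm $L$ of $z-q$, which is injective on $U$ and has $\mathrm{Re}\,L$ bounded above yet tending to $-\infty$ at $q$; a M\"obius transformation then places $L(U)$ inside a disc with $0$ on its boundary corresponding to $q$, and a negative linear function on that disc pulls back to the barrier. Nothing beyond the logarithm itself is needed. For bijectivity, you use Sard's theorem (available since the harmonic $g$ has discrete critical set) to get a smooth compact level curve $\{g = t\}$ bounding $U_t$ and apply the argument principle there; the paper, avoiding even smooth-manifold language, integrates $H'/(H-w)$ over the boundary edges of a union of dyadic squares inside $U$, perturbing the grid so preimages of $w$ miss the edges. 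Your version is shorter and correct; the paper's is more elementary. In sum, your outline is right, but your barrier argument as written is a sketch resting on several facts---the separation, the enclosing cycle's winding number, the continuum capacity estimate---that each cost real work, which is precisely what the paper's direct construction is designed to sidestep.
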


In the proof of this result, it will be useful to be able to assume that $U$ is 
bounded.  For this, we recall the familiar fact that such a $U$ as in the 
theorem is always biholomorphic to a bounded open set.  The proof of this 
in summary form goes like this. Since \(U \neq \CC\) , we can replace $U$ 
by a translate to suppose that \(0 \notin U\).  The function \(1/z\) is then 
holomorphic on \(U\) and hence has an antiderivative \(L(z)\), say. 
Changing $L$ by an additive constant will arrange that \(\exp(L(z)) = z\) 
(this is the usual process for finding complex logarithms).  Then 
\(\exp(L(z)/2)\) is one-to-one on $U$. Choose an open disc in the image of 
\(\exp(L(z)/2)\). The negative of this disc is disjoint from the image of 
\(\exp(L(z)/2)\). So the image of \(\exp(L(z)/2)\), which is biholomorphic to 
$U$, is itself biholomorphic to a bounded open subset of \(\CC\), via a 
linear fractional transformation.
\smallskip

Note that it is not clear by definition that the holomorphic simple connectivity 
is preserved by a biholomorphic mapping since the meaning of taking the 
derivative is different when the coordinates change; but by the complex 
chain rule this is a matter of a holomorphic factor which can be 
assimilated into the original function. Checking the details of this is left 
to the reader as an exercise.
\smallskip

So now we can assume without loss of generality that the open set $U$ is 
bounded.  And by translation we can now assume \(0 \in U\).  We shall look 
for a biholomorphic mapping from $U$ to the unit disc $D$ which takes 0 
to 0. Of course if there is a biholomorphic map from $U$ to the unit disc at 
all, there is one that takes 0 to 0 since a linear fractional transformation 
taking the unit disc $D$ to itself will take any given point to the origin, and 
in particular the image of 0 to begin with can be moved to the origin. 
\smallskip

Now if \(H \colon U \to D\) is biholomorphic and has \(H(0)= 0\), then 
\(H(z)/z\) has a removable singularity at 0 . Hence \(H\) can be written as 
\(z h(z)\) where $h$ is holomorphic on $U$ and  \(h(0) \neq 0\).  
That \(h(0)\neq 0\) follows 
because $H$ is supposed to be one-to-one and hence must have derivative vanishing nowhere.  Of course \(h(z)\) is also nonzero for every other 
\(z \in U\) because 0 is the only point of \(U\) with \(H(z)=0\),
\smallskip

Now there is  an antiderivative \(L\)  of \(h'/h \) on \(U\). 
%
%
The product\break \(h(z) \exp(-L(z))\) is constant since it has derivative 
identically equal to 0 and \(U\) is connected.   Changing $L$ by an additive 
constant, we can assume \(h(z) \exp(-L(z))=1\) for all \(z \in U\).  (This 
familiar argument will occur several times here). 
\smallskip

The essential point of Riemann's method was to consider the harmonic 
function  \(\text{Re}\, L(z)\).  This is of course equal to \(\ln |h(z)|\).  Since 
the ``boundary values'' of \(|H(z)|\) have to be 1, it must be that the 
boundary value of  \(|h|\) at a boundary point \(z_0\) of $U$ has to be 
\(1/ |z_0|\).  In particular,  the harmonic function \(\ln|h|\) has to have 
boundary value at \(z_0\) equal to \(-\ln |z_0|\).
\smallskip 

At this point, Riemann appealed to what he referred to as the 
Dirichlet Principle. The Euler-Lagrange equation for the 
variational problem of minimizing the so-called Dirichlet (energy) integral 
for a real valued function \(f(x,y)\), namely minimizing this integral
\[
\int_U \bigg[\Big( \frac{\partial f}{\partial x} \Big)^2 
+ \Big( \frac{\partial f}{\partial y} \Big)^2 \bigg]~dxdy
\]
under the condition that \(f=g\) on the boundary \(\partial U\) of \(U\),
is easily computed to satisfy  \(\Delta f =0\) (\S 18 of \cite{Riemann1, Riemann2}).
\smallskip

So Riemann proposed that the harmonic function with the boundary 
values \(-\ln |z_0|\)  at each boundary point \(z_0\) could be found by 
minimization of the Dirichlet integral.  And Riemann was well aware of how 
to construct $h$ and hence \(H\) from knowing \(\ln|h(z)|\).  Riemann 
actually expressed this all in terms of \(\ln|H|\) and the idea of Green's 
function, a function with boundary value 0 and a specified singularity at (in 
our case) the point 0, namely the function had to be of the form 
\(\ln|z| + u(z) \) with \(u\) harmonic near the point 0. This is equivalent 
for open sets in \(\CC\) 
to our discussion, though the Green's function notion is useful when one 
tries to extend the Riemann Mapping Theorem to the uniformization 
problem where there is no \textit{a priori} global \(z\)-coordinate.
\medskip

The main difficulty is that there is no particular reason to suppose that 
there is in fact any minimum for the Dirichlet integral in this situation. 
There is also a less serious difficulty of explaining why the resulting 
function is one-to-one and onto---intuitively this is just a matter of 
winding numbers if one can approximate $U$ from the inside by domains 
with smooth or piecewise-smooth closed curve boundaries. One supposes 
that Riemann may have taken this part for obvious, though it is actually 
quite subtle if one does not appeal to any pre-existing topological 
intuitions. We shall give a precise argument later on. 
Riemann apparently considers only domains the boundary of which 
is smooth in some sense. Osgood made the major forward step treating 
simply connected open sets in general, thus proving what we call today 
the Riemann Mapping Theorem. The Osgood proof is acknowledged 
directly by Carath\'eodory \cite{Caratheodory} where the ideas 
involved in the usual proof of today, via normal families,
are presented. See the footnote (**) of page 108 of \cite{Caratheodory}.
But for some reason, Osgood's proof fell from favor or even recognition 
for the history of the Theorem in \cite{Remmert}; there is a reference 
to Osgood's paper but no comment on it, no acknowledgment that this 
is in fact the reasonably complete first proof of the general result.
\medskip

\section{The Application of the Perron Method}

Even simply connected bounded open sets in \(\CC\) can have complicated 
boundaries. The boundary of the Koch snowflake for example has 
Hausdorff dimension greater than 1 \cite{Koch}.  And Osgood \cite{Osgood}
already gave an example with boundary having 
positive (2-dimensional) measure. Thus it is appropriate to introduce 
carefully what is to be meant by finding functions with specified boundary 
values.  For this purpose, let $U$ be a bounded open set in \(\CC\) and  
\(\partial U\) be its boundary, that is the complement of $U$ within the 
closure \(\cl(U)\) of $U$ in \(\CC\), or equivalently the intersection 
of \(\cl(U)\) with \(\CC-U\).  Suppose that 
\(b \colon \partial U \to \RR\) is a continuous function, Then we say that 
a harmonic function \(h\colon U \to \RR\) is a solution of the Dirichlet 
problem on $U$ with boundary values \(b\) if the function 
``\(h \cup b\)'' is continuous on \(\cl(U)\).  Here \(h \cup b\) is the 
function which equals \(h\) on $U$ and equals \(b\) on 
\(\cl(U) - U\).
\smallskip

The Maximum Principle shows immediately that if a given Dirichlet problem 
has a solution, the solution is unique. But it is a fact that given $U$ and a 
function $b$ on \(\partial U\), there may be no solution of the associated 
Dirichlet problem. This is familiar but disconcerting in the present context 
since solving a Dirichlet problem is the basic step in Riemann's approach to 
the Riemann Mapping Theorem, as already indicated.  The easiest example 
of a Dirichlet problem with no solution  is  \(\{z \in \CC \colon 0<|z|< 1\}\) 
with \(b(0)=0\)  and \(b(z)=1\) if \(|z| = 1\).   The reason for the failure is 
simple. Uniqueness shows that any solution \(h(z)\) would have to depend 
on \(|z|\) alone: the problem is rotationally symmetric so the solution would 
have to be.  But the only such harmonic functions have the form 
\(A \ln |z| +B\) where $A$ and $B$ are constants. This follows easily by 
looking at the Laplacian in polar coordinates. But clearly no such function 
solves the Dirichlet problem mentioned.
\smallskip

The open set \(\{z \in \CC \colon 0<|z|<1\}\) is not simply connected.  
And it turns out that on a bounded simply connected open set $U$, 
the Dirichlet problem 
is solvable for every boundary function $b$.   This is the crucial piece of 
information needed in Riemann's proof. This fact fits into a very general 
context. It turns out that the Dirichlet problem with arbitrary boundary 
function $b$ will be solvable provided that no connected component of the 
complement of $U$ consists of a single point. Since a simply connected 
open set (in the topological sense) has a connected but unbounded 
complement, the complement's  one and only component  cannot consist of 
a single point!  However, these points---the condition on the complement 
that guarantees the solution of the Dirichlet problem and the fact that the 
complement of a simply connected open set is connected---are hard to 
establish.  Fortunately, a much simpler argument can be used to show that 
the Dirichlet problem is always solvable on a bounded simply connected 
open set. This involves only the Perron method, which has become a 
standard part of basic complex analysis courses. We can stay on familiar 
ground here. 
\smallskip

In the more than seventy years from Riemann's formulation of his 
Mapping Theorem to Perron's paper \cite{Perron} on the solution of the 
Dirichlet problem under the most general possible circumstances, results 
had been obtained on the solution of the Dirichlet problem for open sets 
with various conditions of boundary regularity. In particular, Schwarz 
\cite{Schwarz} had shown that the problem was solvable if the boundary 
was piecewise analytic.  Every bounded open set $U$ can be 
approximated by open subsets $V$ with piecewise linear boundaries. 
For instance, as we shall discuss in detail momentarily, such $V$ can be 
taken to be a union of squares contained in $U$. In effect, one lays a 
finely divided piece of graph paper (a fine grid of squares) over $U$ and 
takes $V$ to be the union of all the squares whose closure lies in $U$.  
This method was used by Osgood \cite{Osgood} to construct a Green 
function for $U$ relative to some fixed but arbitrary  point of $U$  by 
taking a limit of Green's functions of the sets $V$ of the sort just 
described, as one chose finer and finer grids on the plane. In this 
process, simple connectivity was used (as indeed it had to be) in order 
to guarantee the convergence, and the form in which it was used was 
closely related to the barrier idea that occurs in Perron's method. 
\smallskip

In Perron's method, one begins with a bounded open set $U$ and a 
function $b$ on \(\partial U\) as before. Then one offers as a candidate 
for the solution of the associated Dirichlet problem the function $P$ on 
$U$ defined by
\( P(z) = \sup  S(z) \), where the sup is taken over all (continuous) 
subharmonic functions \(S \colon U \to R\) satisfying  
\(\displaystyle \limsup_{U \ni p_j\to q}  S(p_j) \le b(q)\) for each 
\(q \in \partial U\). 
\smallskip

This function $P$ is always harmonic. But of course it need not have 
the function $b$ as boundary values. I.e., it need not happen that 
\(P \cup b\) is continuous on \(\cl(U)\). This has to fail in 
some instances, since the Dirichlet problem is not always solvable. 
\smallskip

Perron undertook to find a general condition under which $P$ did have 
$b$ as boundary values.  This condition involves the existence of what 
have come to be called \textit{barrier functions}.   

\begin{definition} 
Suppose that \(U\) is a bounded open set in $\CC$ and \(\zeta_0\) is 
a boundary point of \(U\).  A \textit{strong barrier}, or sometimes just  
\textit{barrier}, at \(\zeta_0\) is a continuous function \(u\) defined 
on \(\{z \in \CC \colon |z-\zeta_0| < \epsilon\}\cap U \) for some 
\(\epsilon > 0\) such that 
\begin{itemize}
\item[(\romannumeral 1)] \(u\) is subharmonic.
\item[(\romannumeral 2)] \(u < 0 \).
\item[(\romannumeral 3)] \( \displaystyle
\lim_{z \to \zeta_0} u(z) = 0 \) with limit taken over all \(z\) in the 
domain of $u$.
\item[(\romannumeral 4)] \( \displaystyle
\limsup_{z \to \zeta} u(z) < 0\) for every \(\zeta \in \partial U \cap 
\{z \in \CC \colon 0<|z-\zeta_0| < \epsilon\}\).
\end{itemize}
A \textit{weak barrier} is a function \(u\) satisfying the same conditions 
except that the property (\romannumeral 4) is omitted.
\end{definition}

Perron's solution of the Dirichlet problem is usually presented in complex 
analysis textbooks under the assumption that the bounded domain \(U\) 
has the property that there is a strong barrier at each boundary point.  
However, G. Bouligand showed soon after Perron's original work that in fact 
it was enough to have a weak barrier at each boundary point.

\begin{theorem}[Perron-Bouligand]
If \(U\) is a bounded connected open set in $\CC$ with the property that 
for each boundary point $\zeta_0$ of $U$, there is a weak barrier defined 
on an open disc around \(\zeta_0\), then, for any continous function \(b\) 
on the boundary \(\partial U\) of $U$, the Perron upper envelope function 
$P$ associated to $b$ solves the Dirichlet problem on $U$ with boundary 
values $b$, i.e., $P$ is harmonic on $U$ and $P \cup b$ is continous on 
\(U \cup \partial U\).
\end{theorem}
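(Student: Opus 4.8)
The proof splits into two separate tasks: showing that $P$ is harmonic on $U$, and showing that $P$ attains the boundary value $b(\zeta_0)$ continuously at each $\zeta_0 \in \partial U$. The first task is the part of Perron's method that makes no reference to barriers at all, so I would dispose of it first and then concentrate on the boundary behavior, which is where the weak-barrier hypothesis does all the work.

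**Step 1: $P$ is harmonic.**
First I would record the standard preliminaries on the Perron family $\mathcal S$ of subharmonic functions $S$ on $U$ with $\limsup_{p\to q} S(p) \le b(q)$ for every $q \in \partial U$. The family is nonempty (the constant $\min_{\partial U} b$ lies in it) and uniformly bounded above by $\max_{\partial U} b$ via the maximum principle, so $P = \sup_{S\in\mathcal S} S$ is a well-defined real-valued function on $U$. Then I would invoke the two closure properties of $\mathcal S$: it is closed under taking the maximum of finitely many members, and it is closed under \emph{harmonic modification} (Poisson replacement) on any closed disc $\overline{B} \subset U$ — replacing $S$ on $B$ by the Poisson integral of its boundary values on $\partial B$ keeps the function subharmonic on $U$, does not decrease it, and does not affect the $\limsup$ condition at $\partial U$. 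A now-routine diagonal argument using Harnack's principle then shows that $P$ is harmonic in a neighborhood of each point of $U$: fix $z_0 \in U$ and a disc around it, pick $S_j \in \mathcal S$ with $S_j(z_0) \to P(z_0)$, replace each $S_j$ by the max of $S_1,\dots,S_j$ and then by its harmonic modification on the disc; the resulting increasing sequence of harmonic functions converges, by Harnack, to a harmonic function $\le P$ that agrees with $P$ at $z_0$, and a second application at a nearby point forces equality throughout the disc.

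**Step 2: the boundary value at $\zeta_0$ using a weak barrier — the crux.**
This is the step I expect to be the main obstacle, and it is exactly where the Bouligand refinement (weak rather than strong barrier) must be handled with care. Fix $\zeta_0 \in \partial U$ and $\varepsilon > 0$; by continuity of $b$ choose $\delta \in (0,\varepsilon)$ so that $|b(\zeta) - b(\zeta_0)| < \varepsilon$ for $\zeta \in \partial U$ with $|\zeta - \zeta_0| < \delta$. Let $u$ be a weak barrier on $B(\zeta_0, r) \cap U$ for some $r \le \delta$. The difficulty is that $u$ is only defined near $\zeta_0$ and, lacking property (iv), need not stay bounded away from $0$ on the part of $\partial U$ at distance between, say, $r/2$ and $r$ from $\zeta_0$; so one cannot simply assert $\sup u < 0$ there. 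The device — this is Bouligand's contribution — is to pass to the function $v(z) = \sup\{ u(w) : w \in B(\zeta_0, r)\cap U,\ |w - \zeta_0| \le |z - \zeta_0|\}$, the ``radial'' upper regularization, or more simply to note that $m(\rho) := \sup\{u(z) : z \in U,\ \rho \le |z-\zeta_0| \le r/2\}$ is, for each $\rho \in (0, r/2)$, strictly negative: the set over which the sup is taken is a compact subset of the domain of $u$ on which $u < 0$ by (ii), so a finite negative maximum is attained. One then builds an honest \emph{local} strong barrier out of $u$ by the standard trick of taking $\tilde u(z) = \max\{u(z),\, -c\}$ on $B(\zeta_0, r/2)\cap U$ for a suitable small constant $c > 0$, and extending it by the constant $-c$ to all of $U$; the choice $c$ small enough (namely $c < -m(r/4)$ or the like) guarantees the two pieces agree near $|z-\zeta_0| = r/2$, so the extension is genuinely subharmonic on $U$, is $< 0$, tends to $0$ at $\zeta_0$, and is bounded above by a negative constant outside $B(\zeta_0, r/4)$. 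In other words, a weak barrier at $\zeta_0$ yields a global subharmonic ``tent function'' $w$ on $U$ with $w < 0$, $\limsup_{z\to\zeta_0} w(z) = 0$, and $w \le -\eta < 0$ outside $B(\zeta_0, r/4)$ for some $\eta > 0$.

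**Step 3: the two-sided estimate and conclusion.**
With such a $w$ in hand the rest is the classical barrier argument. For the lower bound: choose $M$ with $M\eta \ge 2\max_{\partial U}|b|$; then $S_0 := b(\zeta_0) - \varepsilon + M w$ is subharmonic on $U$, and one checks $\limsup_{p\to q} S_0(p) \le b(q)$ for every $q \in \partial U$ — for $q$ near $\zeta_0$ this uses $w < 0$ together with $b(q) > b(\zeta_0)-\varepsilon$, and for $q$ away from $\zeta_0$ it uses $w \le -\eta$ together with the crude bound on $|b|$. Hence $S_0 \in \mathcal S$, so $P \ge S_0$, and letting $z \to \zeta_0$ gives $\liminf_{z\to\zeta_0} P(z) \ge b(\zeta_0) - \varepsilon$. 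For the upper bound one runs the symmetric argument on the superharmonic side: $b(\zeta_0) + \varepsilon - M w$ is superharmonic and, by the maximum principle applied to $S - (b(\zeta_0)+\varepsilon - Mw)$ for any $S \in \mathcal S$, it dominates every member of $\mathcal S$, hence dominates $P$; letting $z\to\zeta_0$ yields $\limsup_{z\to\zeta_0} P(z) \le b(\zeta_0)+\varepsilon$. Since $\varepsilon > 0$ was arbitrary, $\lim_{z\to\zeta_0} P(z) = b(\zeta_0)$, which together with Step 1 says precisely that $P \cup b$ is continuous on $U \cup \partial U$ and $P$ is harmonic, completing the proof.
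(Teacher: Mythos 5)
The crux of the Bouligand refinement is exactly the place where your Step~2 breaks down. You assert that
\[
m(\rho) := \sup\{\,u(z) : z \in U,\ \rho \le |z-\zeta_0| \le r/2\,\}
\]
is strictly negative because ``the set over which the sup is taken is a compact subset of the domain of $u$ on which $u<0$.'' That set is not compact: it is the intersection of a compact annulus with the \emph{open} set $U$, and it fails to be closed in $\CC$ whenever $\partial U$ meets the annulus --- which it typically does, since $\zeta_0 \in \partial U$ and nearby boundary points of $U$ generally populate every small annulus around $\zeta_0$. On this set $u$ is indeed pointwise negative, but as $z$ approaches a boundary point $\zeta \ne \zeta_0$ of $U$ inside the annulus, nothing in conditions (i)--(iii) of a weak barrier prevents $u(z)$ from tending to $0$; so $m(\rho)$ may equal $0$ without being attained. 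Preventing exactly this is the content of condition (iv) in the definition of a \emph{strong} barrier, and dropping it is the whole point of the weak-barrier hypothesis. Consequently your tent function $\tilde u = \max\{u,-c\}$, extended by the constant $-c$, need not be subharmonic on $U$: for the two pieces to agree near $|z-\zeta_0|=r/2$ you need $u \le -c$ there for some fixed $c>0$, and no such $c$ exists if $u$ creeps up toward $0$ near boundary points of $U$ on or near that circle. In short, Step~2 as written proves only the strong-barrier version of the theorem; it does not reach the weak-barrier version, which is the part the Perron--Bouligand theorem adds.

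Steps 1 and 3 are fine, and the paper itself does not supply a proof of this theorem --- it refers the reader to Tsuji, Ransford, and Simon. The genuine passage from weak barrier to boundary regularity is more delicate than a max-and-glue construction; standard treatments either show that regularity of $\zeta_0$ is a local property and then produce a strong barrier for the auxiliary domain $U \cap B(\zeta_0,r)$ via the Perron solution of a local Dirichlet problem, using the weak barrier only to control the limit at $\zeta_0$ itself, or argue through harmonic measure estimates. Any such route has to confront the fact that the weak barrier gives no uniform negativity away from $\zeta_0$, which is precisely the estimate your argument assumes for free.
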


The details of this result can be found in \cite{Tsuji} and other standard 
texts on potential theory, e.g. \cite{Ransford, Simon}.

\medskip

Now we turn to the fact that \textit{weak barriers necessarily exist 
at boundary points of bounded open sets} which are holomorphically 
simply connected.  This is perhaps at first sight surprising since 
this condition of holomorphic simple connectivity seems to have 
nothing much to do with the existence of barriers. The argument goes 
as follows. (This is essentially due to Osgood in \cite{Osgood}):

Let $q$ be a point of \(\partial U\).  Then the function  \(D_q (z) = z-q\), 
has no zeros in $U$ and hence there is a function \(L (z)\) such that  
\(\exp(L(z)) =D_q(z) =  z-q\) by an argument already discussed.  This 
requires only the holomorphic simple connectivitiy of $U$. 

Then the function $L$ is one-to-one on $U$ since \(L(z_1) = L(z_2)\) 
implies \(\exp L(z_1)= \exp(L(z_2))\) so that \(z_1-q=z_2 -q\) and so 
\(z_1=z_2\). Hence $L$ is a biholomorphic map of $U$ onto $L(U)$. The 
open set $L(U)$ is unbounded because \(\text{Re}\, L(z) = \ln |z-q|\) 
goes to \(-\infty\) as $z$ approaches $q$. (Note here that, by choice, 
$q$ is in \(\partial U\) so there are sequences of points in $U$ that 
approach $q$). On the other hand, there is a positive real number $A$ 
such that \( \text{Re}\, L(z)  < A \) for all \(z \in U\).  This is just 
because $U$ is bounded so \(|z-q|\) is bounded on \(U\) and 
\( \text{Re}\, L(z) = \ln |z-q| \). It follows that \(U\) can be mapped 
biholomorphically onto a bounded open set $V$ which is contained in 
a disc bounded by a circle $C$ through 0 with 0 being in the boundary 
of the image of $U$ and with 0 corresponding to the boundary point 
$q$. The precise meaning of this last is that for every sequence $z_j$ 
in $U$ converging to $q$, the image sequence converges to 0.
\medskip

\begin{figure}[h]
\begin{center}
\includegraphics[height=1.15in,width=4.8in,angle=0]{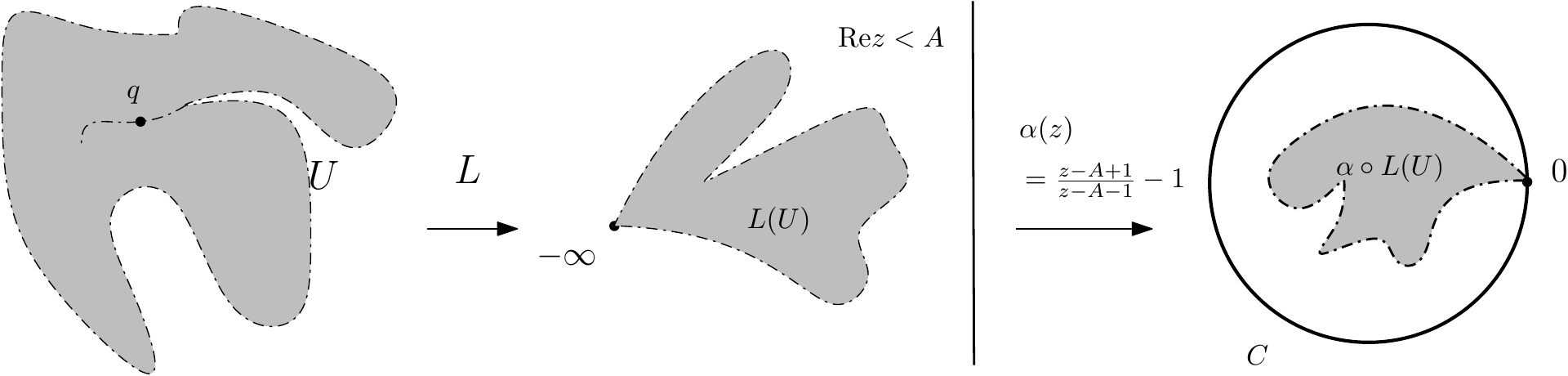}
\caption{\textsf{A weak barrier at \(q \in \partial U\)}}
\end{center}
\end{figure}
\medskip

This biholomorphic mapping is obtained by composing $L$ with a 
linear fractional transformation, say \(\alpha\), that maps the line 
\(\text{Re}\, z = A\) to a circle in \(\CC\) in such a way that the 
image $L(U)$ lies in the 
bounded component of the complement of the circle.  ($L(U)$ lies in 
the complement in the Riemann sphere of 
\(\{z \colon \text{Re}\, z= A\} \cup \{\infty\} \)).    
The image of $L(q)$ via the linear
fractional mapping \(\alpha\) will then be the point 0, and the point 0 lies 
on the the unit circle $C$.

The weak barrier for $U$ at the boundary point $q$ is now 
obtained by choosing an harmonic function which is 0 at 0 and negative 
on the circle $C$ and its interior. This could be chosen as a real linear 
function, for example. Then one pulls this function back to $U$ by the 
composition of $L$ followed by the linear fractional transformation. 

This barrier construction combined with the Perron-Bouligand
result quoted guarantees that there is a harmonic function on 
any bounded holomorphically simply connected open set 
with $h$ having the boundary value \(-\ln |z_0 |\) for 
each \(z_0 \in \partial U\), with $U$ as in the first section.  We 
turn now to how to construct from this the biholomorphic map 
from $U$ to the unit disc $D$ and to the proof that the map 
constructed actually is biholomorphic.  

\section{The construction of $H$ and the proof that $H$ is 
biholomorphic}

We continue the notations and conventions of the first section 
now.  Let \(g(z)\) be the harmonic function on $U$ which has 
the boundary values  \(-\ln|z_0|\) at each boundary point 
\(z_0\) of $U$.  From this, we want to construct the function 
\(h(z)\), which was presumed in the first section to exist as a 
matter of motivation. Now we want to show that there is actually 
an \(h(z)\) like that, with \(H(z) = zh(z)\) being a biholomorphic 
map to the unit disc.

For this purpose, let \(\hat g (z) \) be a harmonic conjugate of 
$g$, i.e., a function such that \(g +i\hat g\) is holomorphic.  
The holomorphic simple connectivity of \(U\) implies the 
existence of \(\hat g\) as follows: The function  
\( \frac{\partial g}{\partial x} - i  \frac{\partial g}{\partial y} \) 
is holomorphic on \(U\) by the Cauchy-Riemann equations. If 
$G$ is a holomorphic anti-derivative of this function, and 
\(G = u+iv,\)  then \(\frac{\partial u}{\partial x} = \text{Re}\, 
\frac{\partial G}{\partial x} = \text{Re}\, \frac{\partial G}{\partial z}
= \frac{\partial g}{\partial x} \)  and 
\( \frac{\partial u}{\partial y} =  - \frac{\partial v}{\partial x} 
= -\text{Im}\, \frac{\partial G}{\partial z} 
= -\big(-\frac{\partial g}{\partial y} \big)
= \frac{\partial g}{\partial y}\). 
Thus \(u\) and \(g\) have the same partial derivatives.  
Hence \(g + i v\) is holomorphic so we can take \(\hat g\) to be 
$v$.

Now set \( h(z) = \exp( g(z) + i \hat g (z)) \) and \( H(z) = z h(z) \).  
Then the function $H$ attains the value 0 exactly once, at \( z=0 \), 
and with multiplicity 1 there. Also, \( |H(z)| \) has boundary value 1 
on \(U\) in the sense that \( |H| \cup (\text{the constant function } 1 
\text{ on }\partial U) \) is continuous.   This function $H$ is our 
candidate for being a biholomorphic map of $U$ onto the unit 
disc \( D = \{z\colon |z|<1\} \). It remains to see that this $H$ 
really is such a biholomorphic map.

If $U$ were the interior of a smooth simple closed curve then one 
could envision a simple proof by considering the winding number 
around a given $w$ in the unit disc of a slight push-in of the boundary 
of $U$. If the boundary were pushed in a small enough amount, then 
the image of the pushed-in boundary would be close to the edge of 
the unit disc. In particular, the line from 0 to $w$ would fail to intersect 
this image. Thus the number of times that this image curve wound 
around $w$ would be the same as the number of times that the image 
curve wound around 0, namely once, since the value 0 is attained exactly 
once inside the pushed-in curve in $U$ (we assume the push-in is small 
enough that 0 is inside the pushed-in curve).  

This is a valid intuition. One rather suspects that Riemann envisioned the 
situation in this way. Unfortunately, this intuitive picture, while it can be 
made precise easily in the smooth boundary case, does not really apply to 
the general case. Moreover, the idea that a simply connected region has 
a single boundary curve is not an easy one to check in detail. It is 
precisely such topological leaps of faith that we want to avoid. So we 
have to maneuver a bit to make a formal version of this intuition that 
makes no appeals to unverified and perhaps unverifiable topological 
intuitions.
\smallskip

We shall provide  a somewhat lengthy but not fundamentally difficult 
argument.  We shall replace the push-in boundary curve by a boundary 
curve made up of the sides of squares.  But we shall not need anything 
about the analogue of the push-in being the boundary of anything with 
specified properties. It might for example have several components or 
have self-intersections. We now proceed with the detailed construction. 
\medskip

First, for each positive integer $N$, consider all the 
closed squares in the plane of the form 
\[
S_N^{n_1, n_2} =  \Big\{ x + iy \in \CC: ~\frac{n_1}{2^N} \le x \le 
\frac{n_1+1}{2^N}, \ \ \frac{n_2}{2^N} \le y \le \frac{n_2+1}{2^N} \Big\}.
\]
Here $n_1$ and $n_2$ are integers but not necessarily positive integers.  
These squares cover the plane and any two distinct ones intersect at
most at a vertex or an edge of each.   We set \(T_N =\) the collection of 
triples \( (N,  n_1, n_2) \) such that the associated square \(S_N^{n_1, n_2}\) 
is completely contained in \(U\) and \(S_N =\) the union of these 
associated squares.   

Note that each square has a natural orientation so that it makes sense for 
example to integrate a complex valued  function continuous in a 
neighborhood of the union of the edges of the square around the four 
edges taken together as a closed curve. Now suppose that $f$ is a 
complex valued function which is continuous in a neighborhood of all 
the edges of squares labeled by elements in $T_N$.  Then the sum of 
the integrals over the edges of  each of the squares \(S_N^{n_1, n_2}\), 
\((N, n_1, n_2) \in T_N\), is defined. 
If an edge is shared by two squares in this collection, it occurs in one 
square with opposite orientation from the orientation it has from the 
other square. Thus we arrive at the result that the sum of the 
integrals of the four-edge boundary curves of the 
squares associated to \(T_N\)  equals the integral of the function 
around the boundary edges of \(S_N\), where a boundary edge 
is by definition one which occurs in one of the \(S_N^{n_1, n_2}\) 
squares with \( (N, n_1, n_2) \in T_N \) but is not shared with any 
other \(T_N\) square. 
\medskip

\begin{figure}[h]
\begin{center}
\includegraphics[height=2in,width=3.5in,angle=0]{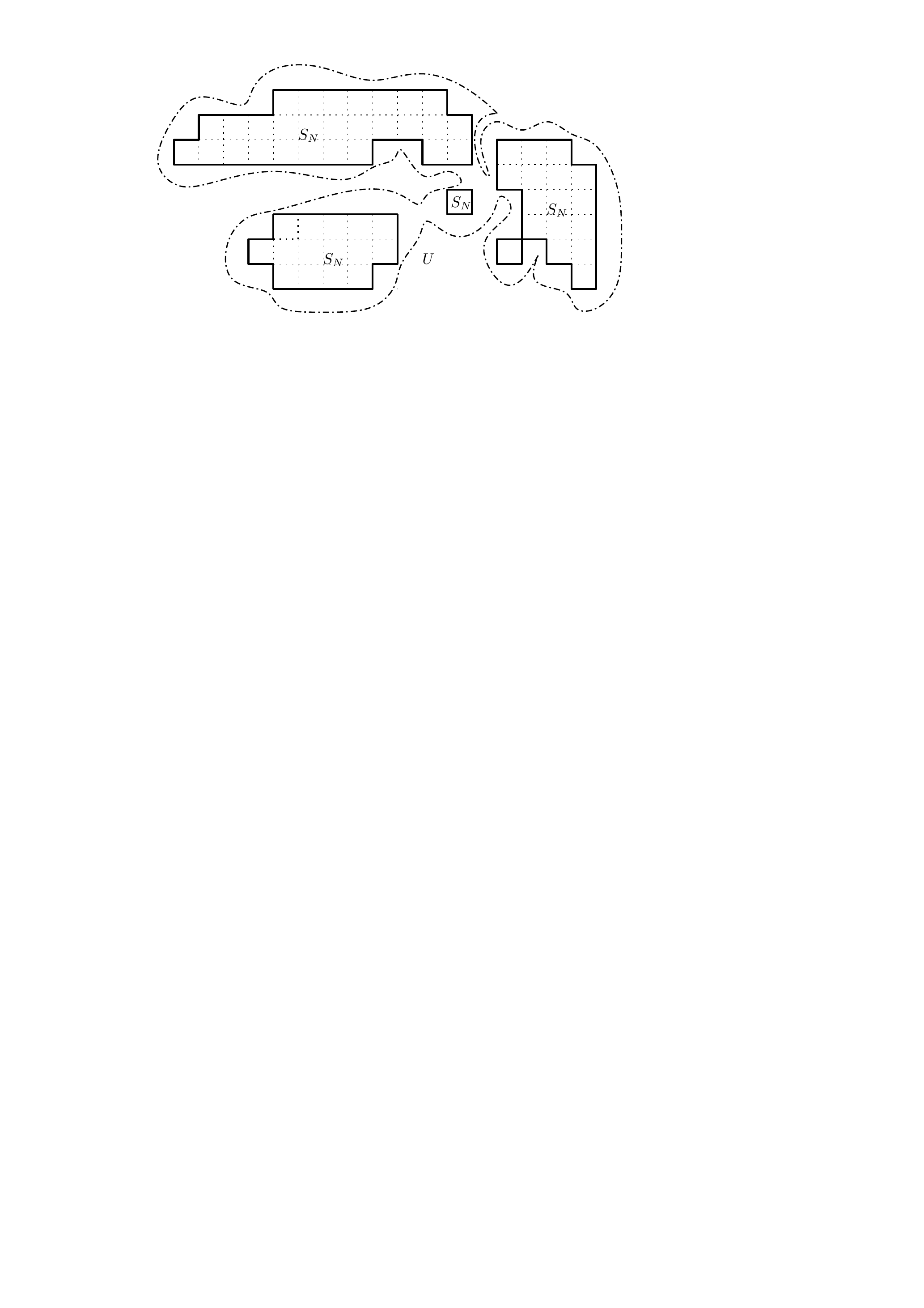}
\caption{\textsf{The squares \(S_N^{n_1,n_2}\) and the set \(S_N\)}}
\end{center}
\end{figure}
\medskip

If $K$ is any compact subset in $U$ 
then there is an $N_K>0$ so large that, if \(N \ge N_K\), then $K$ is 
contained in \( S_N\). This follows from the fact that the interiors of 
the \(S_N\) form an increasing sequence of open sets with union 
equal to $U$.
 With these ideas in mind, we formulate as 
a lemma the basic result we shall use. The lemma refers back to the 
function $H$ defined in the previous section. 

\begin{lemma} 
Suppose that $r$ is a real number with \( 0< r <1\).  Then there is an 
\(N_r>0\) such that, if \(N \ge N_r>0\), then the interior of \(S_N\) 
contains \( H^{-1} (\{z\colon |z| \le r\}) \). Moreover. for any such fixed 
\( N > N_r \), the number of times that \(H\) attains a value 
\(w \in S_N\) with \(|w|<r\)
(and hence the number of times it attains the value $w$ in $U$) is 
exactly the integral \(\displaystyle  \frac1{2 \pi i} \int  
\frac{H'(z)}{H(z)-w} dz \) around the boundary edges of  \(S_N\). 
\end{lemma}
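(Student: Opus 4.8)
The plan is to establish the two assertions of the lemma in turn: first that $H^{-1}(\{z\colon |z|\le r\})$ is a compact subset of $U$, which via the grid facts already recorded gives the existence of $N_r$ at once, and then the counting formula by applying the argument principle square by square.

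For the first part I would use the boundary behaviour of $|H|$. Since the function that is $|H|$ on $U$ and $1$ on $\partial U$ is continuous on the compact set $\cl(U)$ and equals $1>r$ on $\partial U$, the set on which it is $\le r$ is a compact subset of $U$; on $U$ this set is exactly $K:=H^{-1}(\{z\colon|z|\le r\})$. By the fact (already noted) that the interiors of the $S_N$ increase to $U$, the compact set $K$ lies in the interior of $S_N$ once $N$ is large, which is the first assertion for a suitable $N_r$. Moreover every solution of $H(z)=w$ with $|w|<r$ satisfies $|H(z)|=|w|<r$ and hence lies in $K$, so the solutions of $H(z)=w$ in $U$, in $S_N$, and in the interior of $S_N$ all coincide, with multiplicities; this is the parenthetical remark in the statement, and it also shows that $H-w$ has no zero on the boundary edges of $S_N$, so the displayed integral is defined.

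For the counting formula, write $n(w)$ for the number of zeros of $H-w$ in the interior of $S_N$ counted with multiplicity---a finite number, since $H$ is nonconstant ($H(0)=0$ while $|H|$ has boundary value $1$) and these zeros all lie in the compact set $K$---and $I(w)$ for $\frac{1}{2\pi i}\int_{\partial S_N}\frac{H'(z)}{H(z)-w}\,dz$. I would first treat $w$ lying outside the exceptional set $E:=H(\Gamma_N)$, where $\Gamma_N$ is the union (finite, since $U$ is bounded) of all edges of all squares indexed by $T_N$; being a finite union of images of segments, $E$ has planar measure zero, so $\{w\colon|w|<r\}\setminus E$ is dense. For such $w$ the function $H-w$ vanishes on no edge, so the ordinary argument principle applies on each individual square $S_N^{n_1,n_2}$ with $(N,n_1,n_2)\in T_N$ (on a neighbourhood of which $H$ is holomorphic, the closed square lying in $U$), and summing these identities while invoking the edge-cancellation already established (a shared edge occurs in the two squares with opposite orientations) gives $I(w)=n(w)$ for every such $w$.

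Finally I would remove the restriction $w\notin E$ by continuity and connectedness. The function $I$ is holomorphic in $w$ on $\{w\colon|w|<r\}$ (the integrand is jointly continuous and holomorphic in $w$, and $\partial S_N$ is compact), and $n$ is locally constant there: given $w_1$ with $|w_1|<r$, the finitely many zeros of $H-w_1$ lie in the interior of $S_N$, and a standard Rouch\'e/Hurwitz compactness argument---using that for $w$ near $w_1$ no zero of $H-w$ can escape the compact set $H^{-1}(\{z\colon|z|\le\rho\})$ contained in the interior of $S_N$, for a fixed $\rho\in(|w_1|,r)$---shows $n(w)=n(w_1)$ for $w$ near $w_1$. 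Since $\{w\colon|w|<r\}$ is connected, both $n$ and $I$ are constant on it, and they agree on the dense subset $\{w\colon|w|<r\}\setminus E$, hence everywhere. I expect this last step to be the only genuine subtlety: the point is that $S_N$ need not be connected or simply connected, and zeros of $H-w$ may sit on interior edges of the grid, where the naive square-by-square bookkeeping is not directly available; the continuity argument above circumvents this, and alternatively one could invoke the homological form of the argument principle after checking directly that the cycle $\partial S_N$ has winding number $1$ about each point of the interior of $S_N$ and $0$ about each point of $\CC\setminus S_N$.
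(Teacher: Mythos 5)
Your proof is correct, and it handles the one genuinely delicate point---the possibility that a preimage of $w$ lies on an interior edge of the grid---by a route different from the paper's. The paper fixes $w$ and perturbs the \emph{grid}: it replaces the squares $S_N^{n_1,n_2}$ by translates $S_N^{n_1,n_2}(\lambda)$ with the same index set $T_N$, observes that for all small $\lambda>0$ the union is still in $U$ and its interior still contains $H^{-1}(\{|z|\le r\})$, chooses $\lambda$ so that no preimage of the fixed $w$ sits on an edge, applies the argument principle square by square, and then lets $\lambda\to 0$ using continuity of the boundary integral in $\lambda$. You instead fix the grid and perturb $w$: you show the counting identity holds for all $w$ outside the measure-zero set $H(\Gamma_N)$, and then extend to all $w$ with $|w|<r$ by noting that the integral $I(w)$ is holomorphic (hence continuous) and the count $n(w)$ is locally constant, so the two agree everywhere on the connected set $\{|w|<r\}$ once they agree on a dense subset. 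Both arguments are sound; yours fits more naturally with the way the lemma is actually \emph{used} in the next paragraph of the paper (where the continuity of the count in $w$ is the key point and is simply asserted), whereas the paper's grid-translation trick keeps the argument entirely elementary, needing only continuity of a single contour integral in the real parameter $\lambda$ and avoiding any appeal to local constancy of $n(w)$ or to Rouch\'e/Hurwitz. Your first paragraph (compactness of $H^{-1}(\{|z|\le r\})$ via continuity of $|H|\cup 1$) is essentially the same as the paper's, phrased via compactness rather than via a uniform $\delta$-neighborhood of $\partial U$.
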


\begin{proof}
The first statement follows easily from the fact that \(\ln|H|\) has 
boundary values 0 on $U$.  This implies immediately that 
there is a \(\delta\) such that \( |H(z)| > r\) 
at every point \(z \in U\) with \(\dist(z, \partial U) < \delta\) 
(this comes from uniform continuity of the function \(|H| \cup 1\) 
on \(U \cup \partial U\).  If \(N_r\) is so large that the diameter of 
the squares of side length \(2^{-N}\) is less than \(\delta \), then the 
first conclusion holds since being outside of \(S_N\) would imply 
distance to the boundary of $U$ less than \(\delta\) so 
that no point outside \(S_N\) could have $H$ image with absolute 
value less than \(r\).

The second conclusion is more or less immediate if no point on the sides 
of the square labeled by \(T_N\) contains a point in the inverse image 
of $w$: for each \(T_N\) labeled square, the integral 
\(\displaystyle  \frac1{2 \pi i} \int \frac{H'(z)}{H(z)-w} dz \) around the 
edges of the square counts the number of preimages (counting multiplicity) 
of $w$ inside the square. The total number of preimages of $w$ is obtained 
by adding up the numbers in each of the \(T_N\) labeled squares and 
this gives the integral \(\displaystyle \frac1{2 \pi i} \int  
\frac{H'(z)}{H(z)-w} dz \) around the boundary edges.

This argument does not apply if a preimage of $w$ is actually on the edge 
of a square labeled in $T_N$.  However, since the set of  preimages of $w$ 
must be finite in number (since they all lie in a compact set), we can deal 
with this problem as follows: Redo the whole construction with the 
squares with the center of the square grid at \( (\lambda, \lambda) \), 
where \(\lambda\) is a positive number very close to 0 so that the 
squares are of the form 
\begin{align*}
S_N^{n_1,n_2} (\lambda) = \Big\{ x + iy \in \CC: &~\frac{n_1}{2^N} 
+\lambda\le x \le \frac{n_1+1}{2^N}+\lambda,\\
& \quad
\frac{n_2}{2^N}+\lambda \le y \le \frac{n_2+1}{2^N} +\lambda \Big\}.
\end{align*}
If \(\lambda\) is close enough to 0, it will still be true that the union 
\(S_N(\lambda)\) of the squares \(S_N^{n_1,n_2} (\lambda)\), 
\((N, n_1, n_2) \in T_N\), is 
contained in \(U\). It will also be the case that 
the interior of the union \(S_N(\lambda)\)  contains 
\(H^{-1} (\{w\colon |w|\le r\})\), again 
for all \(\lambda>0\) small enough.  And for a fixed $w$ 
with \(|w| \le r\), one can arrange that the edges of these \(T_N\) labeled 
squares do not contain any preimage of $w$ so that the previous case 
applies. However, the integral
\(\displaystyle \frac1{2 \pi i} \int \frac{H'(z)}{H(z)-w} dz \) around the 
boundary edges of \(S_N(\lambda)\) is a continous function of \(\lambda\)
so that, as \(\lambda\) is made to approach 0, 
one obtains the desired conclusion as a limit. 
\end{proof}

The point here is that preimages of $w$ might lie on interior edges of the 
\(T_N\) labeled squares but they cannot lie on the boundary edges so that 
the integral over the union of the boundary edges is a continuous 
function of \(w\).
\medskip

Now we can complete the argument that $H$ attains each value $w$ in the 
unit disc exactly once.  Given $w$ with \(|w|<1\), choose an $r$ with 
\(|w|<r<1\). The Lemma shows that the number of \(H^{-1}(z)\) of a point 
$z$ in the set \(\{z\colon  |z|<r\}\) is integer-valued and moreover, since it 
is given by the integral 
\(\displaystyle  \frac1{2 \pi i} \int \frac{H'(z)}{H(z)-w} dz \) 
over the boundary edges of \(S_N\) for all suitably large $N$, it must 
be continuous as a function of $w$. Hence it is everywhere 1 on 
\(\{w \colon |w| < r\}\) because the point 0 has exactly one preimage 
counting multiplicity, namely the point 0.  Thus \(H\) attains the 
alue $w$ exactly once 
counting multiplicity.  Hence $H$ is one-to-one and onto the unit disc.  
\hfill \(\Box\)
\bigskip

\section{Final remarks}

\subsection{} 
The argument in this last section is specific to the situation at hand, but 
the result is in fact a special case of a general consideration, namely that 
a proper map of one (connected) Riemann surface to another  is a 
branched covering with all points in the image space having the same 
number of pre-images counting multiplicity. This can be proved by 
techniques along the same lines as the ones used here for the concrete 
instance of the Riemann Mapping Theorem.

\subsection{}
The technique of using harmonic function theory to find biholomorphic  
mappings of multiply-connected (i.e., not simply connected) open sets in 
the plane onto model domains has a long and extensive history. The reader 
might wish to consult \cite{Fisher} for a discussion of the general theory. 
The thing that is different about simple connectivity is that there is only 
one model needed (for proper subsets of \(\CC\)).  For higher 
finite-connectivity, the family of models necessarily has a positive 
number of parameters.  
These considerations require topological information that is beyond the 
scope of a short article.

\medskip
\textit{Acknowledgments}. The research of the second named author 
has been supported in part by the SRC-GAIA, an NRF Grant 
2011-0030044 of The Republic of Korea.

\end{document}